\newtheorem{thm}{Theorem}[section]
\newtheorem{lem}[thm]{Lemma}
\newtheorem{pro}[thm]{Proposition}
\theoremstyle{definition}
\newtheorem{den}[thm]{Definition}
\theoremstyle{remark}
\newtheorem{rem}[thm]{Remark}
\numberwithin{equation}{section}
\begin{document}

\title[Connes-amenability of enveloping dual Banach algebra] {On approximate Connes-amenability of enveloping\\ dual Banach algebras}
\author[A. Shirinkalam and A. Pourabbas ]{Ahmad Shirinkalam and  A. Pourabbas }
\address{Faculty of Mathematics and Computer
Science, Amirkabir University of Technology, 424 Hafez Avenue,
Tehran 15914, Iran}
\email{shirinkalam\_a@aut.ac.ir}

 \email{arpabbas@aut.ac.ir}

\subjclass[2010]{Primary: 46H20; Secondary: 46H25, 46H35}

\keywords{Enveloping dual Banach algebra,  Approximate Connes-amenable, Approximate WAP-virtual diagonal, Approximate virtual diagonal}

\begin{abstract}
For a Banach algebra $ \mathcal{A} $, we introduce various  approximate virtual diagonals such as approximate WAP-virtual diagonal and approximate virtual diagonal.

 For the enveloping dual Banach algebra $ F(\mathcal{A}) $ of $ \mathcal{A} $, we show that $ F(\mathcal{A}) $ is approximately Connes-amenable if and only if $ \mathcal{A} $ has an approximate WAP-virtual diagonal.
 
 Further,   for a discrete group $ G $, we show that if the group algebra $ \ell^1(G) $ has an approximate WAP-virtual diagonal,
 then it has 
  an approximate virtual diagonal.
\end{abstract}

\maketitle

\section{Introduction and Preliminaries}
The notion of approximate Connes-amenability for a dual Banach algebra was first introduced by  Esslamzadeh {\it et al.}  (2012). A dual Banach algebra 
$ \mathcal{A} $ is called approximately Connes-amenable if for every normal dual $ \mathcal{A} $-bimodule $ X $, every $ w^*-$continuous derivation $ D:\mathcal{A}\rightarrow X $ is approximately inner, that is, there exists a net $ (x_\alpha) \subseteq X $ such that $ D(b)= \lim_\alpha b\cdot x_\alpha -x_\alpha \cdot b \;(b\in \mathcal{A} ) $.

 Let $ \mathcal{A} $ be a Banach algebra and let $ X $ be a Banach $ \mathcal{A} $-bimodule. An element
 $ x \in X $ is called weakly almost periodic if the module maps $ \mathcal{A}\rightarrow X;\, a\mapsto a\cdot x $ and $ a \mapsto x\cdot a $ are weakly compact. The set of all weakly almost
 periodic elements of $X$ is  denoted by $ \mathrm{WAP}(X) $ which is a norm-closed
 subspace of $ X $. 

 For a Banach algebra $ \mathcal{A} $, one of the most important subspaces of $ \mathcal{A}^* $ is $ \mathrm{WAP}(\mathcal{A}^*) $ which is   left introverted  in the sense of \cite[\S 1]{LL}. 
  Runde (2004) observed that, the space $F( \mathcal{A})= \mathrm{WAP}(\mathcal{A}^*)^* $ is a dual Banach algebra with the first Arens product inherited from $ \mathcal{A}^{**} $. 
    He also showed that $ F(\mathcal{A}) $ is a \textit{canonical} dual Banach algebra associated to $ \mathcal{A} $ (see \cite{LL} or \cite[Theorem  4.10]{R5} for more details).
  
 Choi {\it et al.} (2014) called  $ F(\mathcal{A}) $
  the enveloping dual Banach algebra associated to $ \mathcal{A} $ and they studied Connes-amenability of $ F(\mathcal{A}) $. Indeed,
they introduced the notion of   WAP-virtual diagonal for a Banach algebra $ \mathcal{A} $ and they
showed that for a given Banach algebra $ \mathcal{A} $, the dual Banach algebra $ F(\mathcal{A}) $ is Connes-amenable if and only if $ \mathcal{A} $ admits a WAP-virtual diagonal.
They  also showed that for  a group algebra $ L^1(G) $, the existence of a virtual diagonal is equivalent to the existence of a WAP-virtual diagonal. As a  consequent $ L^1(G) $ is amenable if and only if $ F( L^1(G) ) $ is Connes-amenable, a fact that previously was shown by  Runde (2004).

Motivated by  these results, 
 we investigate an
\textit{approximate} analogue of  WAP-virtual diagonal related to approximate Conns-amenability of $ F(\mathcal{A}) $. 
Indeed, we introduce a notion of an approximate  WAP-virtual diagonal for $ \mathcal{A} $ and we  show that $ F(\mathcal{A}) $ is  approximately Connes-amenable if and only if $ \mathcal{A} $ has an approximate  WAP-virtual diagonal.
We also introduce various notions of  approximate-type  virtual diagonals  for a Banach algebra  -such as approximate virtual diagonal- and we  show that  
for a discrete group $ G $, if $ \ell^1(G) $ has an approximate WAP-virtual diagonal,
then it has an approximate virtual diagonal.

  Given a Banach algebra $ \mathcal{A} $, its unitization  is denoted by $ \mathcal{A}^\sharp $.   For a Banach $ \mathcal{A} $-bimodule $ X $, the topological dual space $ X^* $ of $  X$
becomes a Banach $\mathcal{A} $-bimodule via the following actions 
\begin{eqnarray}\label{action}
	 \langle x, a\cdot \varphi \rangle =\langle x\cdot a, \varphi\rangle, \qquad \langle x, \varphi\cdot a \rangle =\langle a\cdot x, \varphi\rangle \qquad  (a\in \mathcal{A}, x\in X, \varphi \in X^*).
\end{eqnarray}
 \begin{den} 
 	A Banach algebra $ \mathcal{A} $ is called dual, if it is a dual Banach space with a predual $ \mathcal{A}_* $ such that the  multiplication in $ \mathcal{A} $ is separately $ \sigma (\mathcal{A}, \mathcal{A}_*) $-continuous \cite [Definition 1.1]{Ru1}. Equivalently, a Banach algebra $ \mathcal{A} $ is dual if it has a (not necessarily unique) predual  $ \mathcal{A}_* $ which is a closed submodule of $ \mathcal{A}^* $ \cite [Exercise 4.4.1]{R}.
 \end{den}
 Let $ \mathcal{A} $ be a dual Banach algebra and  let a dual Banach space $ X $ be an $  \mathcal{A} $-bimodule. An element $ x\in X $ is called normal, if the module actions $ a\mapsto a\cdot x $ and $ a\mapsto x\cdot a $ are $ w^*\!\!-w^*$-continuous.
 The set of all normal elements in $ X $ is denoted by $ X_\sigma $. We say that $ X $ is normal if  $X=X_\sigma $. 

Recall that  for a Banach algebra $ \mathcal{A} $ and a Banach $ \mathcal{A}$-bimodule $ X $, a bounded linear map $  D:\mathcal{A}\rightarrow X $ is called a bounded derivation if $ D(ab)=a\cdot D(b)+D(a)\cdot b $ for every $ a,b \in \mathcal{A} $. A derivation $  D:\mathcal{A}\rightarrow X $ is called inner if there exists  an element $ x \in X $ such that for every $ a\in \mathcal{A} $, $D(a)=\mathrm{ad}_x(a)= a \cdot x - x \cdot a $.
 A dual Banach algebra $ \mathcal{A} $ is called Connes-amenable if for every normal dual $ \mathcal{A}$-bimodule $ X $, every $ w^*\!\!-w^*$-continuous derivation $ D:\mathcal{A}\rightarrow X $ is inner \cite [Definition 1.5]{R5}.

Following \cite{CSS}, for a given Banach algebra $ \mathcal{A} $ and a Banach $ \mathcal{A} $-bimodule $ X $, the $ \mathcal{A} $-bimodule $ \mathrm{WAP}(X^*) $ is denoted  by $ F_{\mathcal{A}}(X)_* $ and its dual space $ \mathrm{WAP}(X^*)^* $ is  denoted by $ F_{\mathcal{A}}(X)$.
 Choi {\it et al.}  \cite[Theorem  4.3]{CSS}  showed that   if $ X $ is a Banach $ \mathcal{A} $-bimodule, then   $ F_{\mathcal{A}}(X) $  is  a normal dual  $ F(\mathcal{A}) $-bimodule. Since $ \mathrm{WAP}(X^*) $ is a closed subspace of $ X^* $, we have a quotient map $q: X^{**} \twoheadrightarrow F_{\mathcal{A}}(X)$. Composing the canonical inclusion map  $ X \hookrightarrow X^{**} $ with $ q $, we obtain a continuous 
$ \mathcal{A} $-bimodule map $ \eta_X:X \rightarrow F_{\mathcal{A}}(X) $ which has a $w^*$-dense range. In the special case where $ X= \mathcal{A}$, we usually omit the subscript and simply use the notation $ F(\mathcal{A}) $ and the map $ \eta_\mathcal{A}:\mathcal{A} \rightarrow F({\mathcal{A}})$ is an algebra homomorphism.

It is well-known that for a given Banach  algebra $ \mathcal{A} $, the projective tensor product $ \mathcal{A}\hat{\otimes} \mathcal{A} $ is a 
 Banach $ \mathcal{A} $-bimodule through
$$ a \cdot (b \otimes c):=ab \otimes c,  \quad (b \otimes c)\cdot a := b \otimes ca, \quad (a, b, c\in \mathcal{A}) $$
 and  the map  $ \Delta:\mathcal{A}\hat{\otimes} \mathcal{A} \rightarrow \mathcal{A} $ defined on elementary tensors by $\Delta (a \otimes b)=ab  $ and extended by linearity and continuity, is an $ \mathcal{A} $-bimodule map with respect to the  module structure of $ \mathcal{A}\hat{\otimes} \mathcal{A} $. Let $ \Delta_{\mathrm{WAP}}:F_{\mathcal{A}}(\mathcal{A}\hat{\otimes} \mathcal{A})\rightarrow F(\mathcal{A}) $ be the $ w^*\!\!-w^*$-continuous  $ \mathcal{A} $-bimodule map induced by  $ \Delta:\mathcal{A}\hat{\otimes} \mathcal{A} \rightarrow \mathcal{A} $ (see \cite[Corollary  5.2]{CSS} for more details). 
\begin{den}\cite [Definition 6.4]{CSS}
An element $ M \in F_{ \mathcal{A} }(\mathcal{A}\hat{\otimes} \mathcal{A}) $ is called a $\mathrm{WAP}$-virtual diagonal for $  \mathcal{A} $ if 
$$a\cdot M= M\cdot a\quad  \text{and} \quad \Delta_{\mathrm{WAP}}(M)\cdot a=\eta_{\mathcal{A}} (a) \qquad (a\in \mathcal{A}).$$
\end{den}
Recall that an $ \mathcal{A} $-bimodule $ X $ is called neo-unital if every $ x \in X $ can be written as $ a \cdot y \cdot b $ for some $a, b \in  \mathcal{A}  $ and $ y \in X $.
\begin{rem}\label{ess}
	Let $ \mathcal{A} $ be a Banach algebra with a bounded approximate identity and let $ X $ be a Banach $ \mathcal{A} $-bimodule. Then by \cite[Proposition 1.8]{J}, the subspace $ X_{ess} := \mathrm{lin} \{a\cdot x\cdot b : a, b \in  \mathcal{A},\, x \in X   \}
	$ is a  neo-unital closed sub-$ \mathcal{A} $-bimodule of $ X $. Moreover, $ X_{ess}^{\perp} $ is complemented in $ X^* $. \end{rem}
\section{Approximate Connes-amenability of $ F(\mathcal{A}) $}
In this section we find  some conditions under  which $ F(\mathcal{A}) $ is  approximately Connes-amenable. 	For an arbitrary dual Banach algebra $ \mathcal{A} $, approximate Connes-amenability of $ \mathcal{A} $ is equivalent to approximate Connes-amenability of $ \mathcal{A}^\sharp $ \cite [Proposition 2.3]{ESA}, so without loss of generality throughout of this section  we may suppose that $ F(\mathcal{A}) $ has an identity element $ e $.
\begin{rem}\label{unit} Let $\mathcal{A} $ be a Banach algebra with an identity element $ e $.
Let $X $ be a normal dual $\mathcal{A} $-bimodule and let $ D:\mathcal{A} \rightarrow X $ be a 
bounded derivation.
By \cite[Lemma 2.3]{GL}, there exists a bounded derivation $ D_1 :\mathcal{A} \rightarrow e \cdot X\cdot e $ such that  $ D=D_1+\mathrm{ad}_x $ for some $ x \in X $. Furthermore, $D$ is inner derivation if and only if $D_1$ is inner.
\end{rem}
\begin{lem}\label{12}
Let $ \mathcal{A} $ be a Banach algebra. Then  $ F(\mathcal{A}) $ is approximately Connes-amenable if and only if every bounded derivation from $ \mathcal{A} $ into a unit-linked, normal dual $F(\mathcal{A}) $-bimodule is approximately inner.
\end{lem}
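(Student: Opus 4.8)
The plan is to reduce approximate Connes-amenability of $F(\mathcal{A})$ to a statement about unit-linked modules, and then to transfer derivations back and forth between $F(\mathcal{A})$ and $\mathcal{A}$ along the homomorphism $\eta_{\mathcal{A}}$, whose range is $w^*$-dense. For the reduction, we may assume by \cite[Proposition 2.3]{ESA} that $F(\mathcal{A})$ has an identity $e$. If $X$ is a normal dual $F(\mathcal{A})$-bimodule and $D\colon F(\mathcal{A})\to X$ a $w^*$-continuous derivation, Remark \ref{unit} gives $D=D_1+\mathrm{ad}_x$ with $D_1\colon F(\mathcal{A})\to e\cdot X\cdot e$. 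I would first record that $e\cdot X\cdot e$ is a unit-linked normal dual $F(\mathcal{A})$-bimodule ($w^*$-closed in $X$ as the kernel of the $w^*$-continuous map $y\mapsto y-e\cdot y\cdot e$, with the module actions restricting and $e$ acting as the identity) and that $D_1$ is $w^*$-continuous. Since $\mathrm{ad}_x$ is inner, $D$ is approximately inner whenever $D_1$ is; conversely, if $D=\lim_\alpha\mathrm{ad}_{z_\alpha}$, then applying $y\mapsto e\cdot y\cdot e$ gives $D_1(T)=e\cdot D(T)\cdot e=\lim_\alpha\mathrm{ad}_{e\cdot z_\alpha\cdot e}(T)$ with $e\cdot z_\alpha\cdot e\in e\cdot X\cdot e$. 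So $F(\mathcal{A})$ is approximately Connes-amenable if and only if every $w^*$-continuous derivation from $F(\mathcal{A})$ into a unit-linked normal dual $F(\mathcal{A})$-bimodule is approximately inner.

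For necessity, let $X$ be a unit-linked normal dual $F(\mathcal{A})$-bimodule and $d\colon\mathcal{A}\to X$ a bounded derivation, $\mathcal{A}$ acting through $\eta_{\mathcal{A}}$. I would first lift $d$ to a $w^*$-continuous derivation $\widetilde D\colon F(\mathcal{A})\to X$ with $\widetilde D\circ\eta_{\mathcal{A}}=d$: since $X$ is normal, the module extension $\mathcal{B}:=F(\mathcal{A})\ltimes X$, with product $(T,\xi)(S,\zeta)=(TS,\,T\cdot\zeta+\xi\cdot S)$ and predual $F(\mathcal{A})_*\oplus X_*$, is a dual Banach algebra, and $a\mapsto(\eta_{\mathcal{A}}(a),d(a))$ is a continuous homomorphism into it; by the universal property of the canonical dual Banach algebra $F(\mathcal{A})$ \cite[Theorem 4.10]{R5} it factors through $\eta_{\mathcal{A}}$ via a unique $w^*$-continuous homomorphism $F(\mathcal{A})\to\mathcal{B}$, which, composed with the coordinate projections and using uniqueness, has the form $T\mapsto(T,\widetilde D(T))$ with $\widetilde D$ a $w^*$-continuous derivation. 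Then, if $F(\mathcal{A})$ is approximately Connes-amenable, there is a net $(\xi_\alpha)\subseteq X$ with $\widetilde D(T)=\lim_\alpha(T\cdot\xi_\alpha-\xi_\alpha\cdot T)$ for all $T$; taking $T=\eta_{\mathcal{A}}(a)$ gives $d(a)=\lim_\alpha(a\cdot\xi_\alpha-\xi_\alpha\cdot a)$, so $d$ is approximately inner.

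For sufficiency, by the reduction it is enough to take a unit-linked normal dual $F(\mathcal{A})$-bimodule $X$ and a $w^*$-continuous derivation $D_1\colon F(\mathcal{A})\to X$ and show it is approximately inner. Here $d:=D_1\circ\eta_{\mathcal{A}}\colon\mathcal{A}\to X$ is a bounded derivation, so by hypothesis there is a net $(\xi_\alpha)\subseteq X$ with $D_1(\eta_{\mathcal{A}}(a))=\lim_\alpha(\eta_{\mathcal{A}}(a)\cdot\xi_\alpha-\xi_\alpha\cdot\eta_{\mathcal{A}}(a))$ for every $a\in\mathcal{A}$, and, $X$ being normal, both $D_1$ and each $\mathrm{ad}_{\xi_\alpha}$ are $w^*$-continuous on $F(\mathcal{A})$ while $\eta_{\mathcal{A}}(\mathcal{A})$ is $w^*$-dense. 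The main obstacle lies precisely in upgrading the pointwise approximation of $D_1$ on $\eta_{\mathcal{A}}(\mathcal{A})$ to a single net of inner derivations approximating $D_1$ pointwise on all of $F(\mathcal{A})$: this is not formal, because $(\xi_\alpha)$ need not be norm bounded, and I would carry it out by re-indexing over the directed set of pairs $(E,\varepsilon)$ with $E\subseteq F(\mathcal{A})$ finite and $\varepsilon>0$, choosing for each pair an index with $\|D_1(T)-\mathrm{ad}_{\xi}(T)\|<\varepsilon$ on $E$, where the passage from $\eta_{\mathcal{A}}(\mathcal{A})$ to $F(\mathcal{A})$ uses the $w^*$-continuity of $D_1-\mathrm{ad}_{\xi}$, the $w^*$-lower semicontinuity of the norm, and the weak compactness of the relevant orbit maps encoded in $F(\mathcal{A})_*=\mathrm{WAP}(\mathcal{A}^*)$ and the normality of $X$. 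The resulting net exhibits $D_1$ as approximately inner, whence $F(\mathcal{A})$ is approximately Connes-amenable.
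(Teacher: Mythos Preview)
Your forward direction (approximate Connes-amenability implies the derivation condition) is correct but takes a genuinely different route from the paper. You lift $d$ directly to a $w^*$-continuous derivation $\widetilde D\colon F(\mathcal{A})\to X$ by forming the module-extension dual Banach algebra $F(\mathcal{A})\ltimes X$ and invoking the universal property of $F(\mathcal{A})$; the paper instead lifts to $F_{\mathcal{A}}(N)$ using \cite[Theorem~4.4]{ESA}, applies approximate Connes-amenability there, and then pushes the resulting net back into $N$ via the retraction $\epsilon_N$ of \cite[Corollary~5.3]{ESA}. Your argument is more self-contained (it avoids the $F_{\mathcal{A}}(\cdot)$ functor on modules and the auxiliary map $\epsilon_N$), while the paper's reuses machinery already in place from \cite{ESA,CSS}.

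For the converse you follow the same overall strategy as the paper: restrict $D_1$ along $\eta_{\mathcal{A}}$, use the hypothesis to get $d=D_1\circ\eta_{\mathcal{A}}$ approximately inner via some net $(\xi_\alpha)\subseteq X$, and then argue that $D_1$ itself is approximately inner on $F(\mathcal{A})$. The paper dispatches this last step in one sentence (``$D'$ extends to a $w^*$-continuous approximately inner derivation $D''$''); you rightly isolate it as the main obstacle. However, your sketch does not close it. The proposed re-indexing over pairs $(E,\varepsilon)$ with $E\subseteq F(\mathcal{A})$ finite requires that for each fixed $T\in F(\mathcal{A})$ one has $\|D_1(T)-\mathrm{ad}_{\xi_\alpha}(T)\|\to 0$. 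Approximating $T$ by a \emph{bounded} net $\eta_{\mathcal{A}}(a_\beta)\xrightarrow{w^*}T$ and using $w^*$-continuity of $D_1-\mathrm{ad}_{\xi_\alpha}$ together with $w^*$-lower semicontinuity of the norm yields only
\[
\|D_1(T)-\mathrm{ad}_{\xi_\alpha}(T)\|\;\le\;\sup_\beta\|d(a_\beta)-\mathrm{ad}_{\xi_\alpha}(a_\beta)\|,
\]
and there is no reason for the right-hand side to tend to zero with $\alpha$: pointwise convergence $\mathrm{ad}_{\xi_\alpha}\to d$ on $\mathcal{A}$ does not upgrade to uniform convergence on bounded sets when $(\xi_\alpha)$ is unbounded, which is exactly the regime you flagged. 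The phrase ``weak compactness of the relevant orbit maps encoded in $F(\mathcal{A})_*=\mathrm{WAP}(\mathcal{A}^*)$'' is not made precise enough to supply the missing uniformity; it is unclear which orbit maps are meant or how their weak compactness would control $\sup_\beta$. To match the paper you may simply assert the extension step as the paper does, but if you wish to genuinely justify it you will need an additional argument (for instance, an equicontinuity or boundedness mechanism, or a different construction of the approximating net) beyond what is indicated here.
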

\begin{proof}
Suppose that $ F(\mathcal{A}) $ is approximately Connes-amenable. Let $ N $ be a unit-linked, normal dual $F(\mathcal{A}) $-bimodule and let $ D :\mathcal{A} \rightarrow N  $ be a bounded derivation. We show that $ D $ is approximately inner. By \cite[Theorem 4.4]{ESA}, there is a $ w^*\!\!-w^*$-continuous derivation $  \tilde{D} : F(\mathcal{A}) \rightarrow F_\mathcal{A} (N) $ such that $\tilde{D} \eta_\mathcal{A}= \eta_ND $, that is, the following diagram commutes
\begin{center}
\begin{tikzpicture}
	\matrix [matrix of math nodes,row sep=1cm,column sep=1cm,minimum width=1cm]
	{
		|(A)| \displaystyle \mathcal{A}  &   |(B)|   N     \\
		|(C)|     F(\mathcal{A})     &   |(D)|  F_\mathcal{A} (N). \\
	};
	\draw[->]    (A)-- node [above] { $ D $}(B);
	\draw[->]   (A)--  node [left] { $ \eta_\mathcal{A} $} (C);
	\draw[->]  (C)-- node [below]   { $ \tilde{D} $}(D);
	\draw[->]  (B)-- node [right]  {$ \eta_N $}(D);
	\end{tikzpicture}
\end{center}
 
 Since $ F_\mathcal{A} (N)  $ is a normal dual $F(\mathcal{A}) $-bimodule and  $ F(\mathcal{A}) $ is approximately Connes-amenable,  the derivation $ \tilde{D} $ 
is approximately inner. Since $ \eta_\mathcal{A} $ is an algebra homomorphism,   the derivation $ \tilde{D} \eta_\mathcal{A} $ is also approximately inner.
Hence the derivation $ \eta_ND $ is approximately inner, that is, there is a net $ (\xi_i)\subseteq F_\mathcal{A} (N) $
such that $$ \eta_ND (a)= \lim_i a\cdot\xi_i - \xi_i \cdot a \quad (a \in \mathcal{A}) .$$
By \cite[Corollary 5.3]{ESA} there is a 
$ w^*\!\!-w^*$-continuous $ \mathcal{A} $-bimodule map $ \epsilon_N :F_\mathcal{A} (N)\rightarrow N  $ such that $ \epsilon_N\eta_N= \mathrm{id}_N  $. Now by setting $ x_i= \epsilon_N(\xi_i) \in N$ for each $ i $  we have
$$D(a)=\epsilon_N\eta_ND(a)=\epsilon_N(\lim_i a\cdot\xi_i - \xi_i \cdot a)=\lim_i a\cdot x_i - x_i \cdot a,  $$ so $ D:\mathcal{A} \rightarrow N  $ is approximately inner.

Conversely, suppose that every bounded derivation from $ \mathcal{A} $ into a unit-linked, normal dual $F(\mathcal{A}) $-bimodule is approximately inner. Let $ N $ be a normal dual $F(\mathcal{A}) $-bimodule and let $ D:F(\mathcal{A}) \rightarrow N $ be a 
$ w^*\!\!-w^*$-continuous derivation.  By Remark \ref{unit} without loss of generality we may assume that $ N $ is a unit-linked, normal dual $F(\mathcal{A}) $-bimodule. We shall show that $ D $ is approximately inner. Since $ \eta_\mathcal{A} $ is an algebra homomorphism,
 the map $ D^\prime :=D\eta_\mathcal{A}:\mathcal{A} \rightarrow N $ is a bounded derivation, so
 by assumption $ D^\prime  $ is approximately inner.
 Since $ N $ is normal, $ D^\prime $ extends to a $ w^*\!\!-w^*$-continuous approximately inner derivation $  D^{\prime\prime} :F(\mathcal{A})\rightarrow N $. The derivations $ D $ and $  D^{\prime\prime} $ are $ w^*\!\!-w^*$-continuous and they  agree  on the $w^*-$dense subset $ \eta_\mathcal{A}(\mathcal{A})\subseteq  F(\mathcal{A})$, so they  agree on $ F(\mathcal{A}) $.
 Hence $ D $ is approximately inner, that is, $ F(\mathcal{A}) $ is approximately Connes-amenable.
\end{proof}
\begin{rem}\label{s}
Since $ F(\mathcal{A}) $ has an identity element $ e $,
by \cite [Theorem 6.8]{CSS} there exists an element $ s \in F_{\mathcal{A}}(\mathcal{A}\hat{\otimes} \mathcal{A}) $
such that $\Delta_{\mathrm{WAP}}(s)=e.$ We use this fact in the following lemma several times. 
\end{rem}
\begin{lem}\label{lemmm}
	Let $ \mathcal{A} $ be a Banach algebra. Then the following are equivalent
	\begin{enumerate}
		\item[(i)] $ F(\mathcal{A}) $ is approximately Connes-amenable;
		\item[(ii)] There exists a  net $ (M_\alpha)_\alpha \subseteq F_{\mathcal{A}}(\mathcal{A}\hat{\otimes} \mathcal{A})  $ such that for every $ a \in \mathcal{A} $, $ 	a \cdot M_\alpha - M_\alpha \cdot a \rightarrow 0 $ and $\Delta_{\mathrm{WAP}}(M_\alpha) = e  $ for all $ \alpha $.
	\end{enumerate}
\end{lem}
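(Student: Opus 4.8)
The strategy is to adapt the proof of \cite[Theorem 6.8]{CSS} (the correspondence between Connes-amenability of $F(\mathcal{A})$ and $\mathrm{WAP}$-virtual diagonals), systematically replacing ``inner'' by ``approximately inner'' and the single $\mathrm{WAP}$-virtual diagonal by the net $(M_\alpha)_\alpha$.

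\textbf{$(i)\Rightarrow(ii)$.} By Remark \ref{s} choose $s\in F_{\mathcal{A}}(\mathcal{A}\hat{\otimes}\mathcal{A})$ with $\Delta_{\mathrm{WAP}}(s)=e$ and consider the inner derivation $\mathrm{ad}_s\colon F(\mathcal{A})\to F_{\mathcal{A}}(\mathcal{A}\hat{\otimes}\mathcal{A})$, $\phi\mapsto\phi\cdot s-s\cdot\phi$. As $F_{\mathcal{A}}(\mathcal{A}\hat{\otimes}\mathcal{A})$ is a normal dual $F(\mathcal{A})$-bimodule, $\mathrm{ad}_s$ is $w^*\!\!-w^*$-continuous. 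Since $\Delta_{\mathrm{WAP}}$ is a $w^*\!\!-w^*$-continuous $F(\mathcal{A})$-bimodule map and $e$ is the identity,
\[
\Delta_{\mathrm{WAP}}(\mathrm{ad}_s(\phi))=\phi\cdot\Delta_{\mathrm{WAP}}(s)-\Delta_{\mathrm{WAP}}(s)\cdot\phi=\phi\cdot e-e\cdot\phi=0\qquad(\phi\in F(\mathcal{A})),
\]
so $\mathrm{ad}_s$ has range in $K:=\ker\Delta_{\mathrm{WAP}}$, a $w^*$-closed $F(\mathcal{A})$-submodule and hence itself a normal dual $F(\mathcal{A})$-bimodule. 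By approximate Connes-amenability of $F(\mathcal{A})$ there is a net $(N_\alpha)_\alpha\subseteq K$ with $\phi\cdot s-s\cdot\phi=\lim_\alpha(\phi\cdot N_\alpha-N_\alpha\cdot\phi)$ for every $\phi\in F(\mathcal{A})$. Put $M_\alpha:=s-N_\alpha$. Then $\Delta_{\mathrm{WAP}}(M_\alpha)=\Delta_{\mathrm{WAP}}(s)=e$ for all $\alpha$, and taking $\phi=\eta_{\mathcal{A}}(a)$ gives $a\cdot M_\alpha-M_\alpha\cdot a\to 0$ for every $a\in\mathcal{A}$, which is $(ii)$.

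\textbf{$(ii)\Rightarrow(i)$.} By Lemma \ref{12} it suffices to show that every bounded derivation $D\colon\mathcal{A}\to N$ into a unit-linked normal dual $F(\mathcal{A})$-bimodule $N$ is approximately inner; the plan is to obtain implementing elements by ``pairing'' $D$ with the $M_\alpha$. Introduce the bounded right $\mathcal{A}$-module map $\Psi_D\colon\mathcal{A}\hat{\otimes}\mathcal{A}\to N$, $\Psi_D(a\otimes b)=D(a)\cdot b$, whose deviation from being a left module map is $\Psi_D(c\cdot u)-c\cdot\Psi_D(u)=D(c)\cdot\Delta(u)$ $(c\in\mathcal{A},\,u\in\mathcal{A}\hat{\otimes}\mathcal{A})$. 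Using normality of $N$ and the construction of $F_{\mathcal{A}}(\cdot)$ from \cite{CSS}, one verifies that $\Psi_D^{*}$ maps $N_*$ into $\mathrm{WAP}((\mathcal{A}\hat{\otimes}\mathcal{A})^*)=F_{\mathcal{A}}(\mathcal{A}\hat{\otimes}\mathcal{A})_*$: the two orbit maps of $\Psi_D^{*}(\psi)$ are obtained from the (weakly compact, by normality) orbit maps of $\psi\in N_*$ by composition with bounded linear maps and, in one case, addition of a term factoring through $\Delta$, hence are weakly compact. Thus $\Psi_D^{*}|_{N_*}$ admits an adjoint $\widehat{\Psi_D}\colon F_{\mathcal{A}}(\mathcal{A}\hat{\otimes}\mathcal{A})\to N$, which is $w^*\!\!-w^*$-continuous, and we set $x_\alpha:=-\widehat{\Psi_D}(M_\alpha)\in N$. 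A computation with \eqref{action} and the derivation identity, together with the facts that $\Delta_{\mathrm{WAP}}$ is induced by $\Delta$, that $N$ is unit-linked, and that (again by normality, via the factorization of weakly compact $\mathcal{A}$-module maps through $F(\mathcal{A})$) one has $\langle e,\kappa_{a,\psi}\rangle=\langle D(a),\psi\rangle$ where $\langle d,\kappa_{a,\psi}\rangle=\langle D(a),d\cdot\psi\rangle$, yields
\[
\langle a\cdot x_\alpha-x_\alpha\cdot a,\psi\rangle=\langle D(a),\psi\rangle-\big\langle a\cdot M_\alpha-M_\alpha\cdot a,\,\Psi_D^{*}(\psi)\big\rangle\qquad(\psi\in N_*).
\]
For each fixed $a$ the last term tends to $0$ because $a\cdot M_\alpha-M_\alpha\cdot a\to 0$, so $a\cdot x_\alpha-x_\alpha\cdot a\to D(a)$ in the $w^*$-topology of $N$; a routine convexity (Mazur) argument then produces a single net implementing $D$ in norm. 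Hence $D$ is approximately inner, and by Lemma \ref{12}, $F(\mathcal{A})$ is approximately Connes-amenable.

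The main obstacle is in the implication $(ii)\Rightarrow(i)$: one must show that $\Psi_D^{*}$ carries $N_*$ into $\mathrm{WAP}((\mathcal{A}\hat{\otimes}\mathcal{A})^*)$ — so that $x_\alpha$ is genuinely an element of $N$ — even though $\Psi_D$ is only a one-sided module map, and then arrange the duality bookkeeping so that precisely the defect $D(c)\cdot\Delta(u)$ is absorbed by $\Delta_{\mathrm{WAP}}(M_\alpha)=e$ and the unit-linkedness of $N$, leaving only the genuinely vanishing error $a\cdot M_\alpha-M_\alpha\cdot a$. This, together with the upgrade from $w^*$- to norm-convergence, is where the exact argument of \cite{CSS} has to be modified for the approximate setting.
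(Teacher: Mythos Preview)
Your proof of $(i)\Rightarrow(ii)$ is essentially identical to the paper's (up to a harmless sign convention in the choice of $M_\alpha$).

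For $(ii)\Rightarrow(i)$ you take a genuinely different route. The paper invokes the \emph{weakly universal derivation} from \cite[Theorem~6.8]{CSS}: with $s$ as in Remark~\ref{s} one sets $d(a)=s\cdot a-a\cdot s$, and CSS supplies a $w^*\!\!-w^*$-continuous $\mathcal{A}$-bimodule map $f\colon\ker\Delta_{\mathrm{WAP}}\to N$ with $fd=D$; then $y_\alpha:=f(M_\alpha-s)$ works immediately, since $a\cdot(M_\alpha-s)-(M_\alpha-s)\cdot a\to d(a)$ in norm and $f$ is bounded. Your argument instead builds the transfer map by hand via $\Psi_D(a\otimes b)=D(a)\cdot b$ and its ``WAP-adjoint'' $\widehat{\Psi_D}$. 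This is closer in spirit to the paper's proof of Theorem~\ref{main} (where the functionals $f_x(a\otimes b)=(a\cdot D(b))(x)$ play the role of $\Psi_D^*(\psi)$), and it is correct, but it carries extra baggage: you must check that $\Psi_D^*(N_*)\subseteq\mathrm{WAP}((\mathcal{A}\hat\otimes\mathcal{A})^*)$ and that $\kappa_{a,\psi}\in\mathrm{WAP}(\mathcal{A}^*)$, whereas the paper's appeal to the weakly universal property packages all of this into one citation. What your approach buys is self-containment and an explicit formula for the implementing net; what the paper's approach buys is brevity.

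One point to correct: the Mazur step is superfluous. Your own identity
\[
\langle a\cdot x_\alpha-x_\alpha\cdot a,\psi\rangle=\langle D(a),\psi\rangle-\langle a\cdot M_\alpha-M_\alpha\cdot a,\ \Psi_D^{*}(\psi)\rangle
\]
already gives $\lVert a\cdot x_\alpha-x_\alpha\cdot a-D(a)\rVert\le\lVert D\rVert\,\lVert a\cdot M_\alpha-M_\alpha\cdot a\rVert\to 0$ in norm, not merely weak$^*$; so the convexity upgrade can be deleted.
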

\begin{proof}
	(i)$ \Rightarrow$(ii)  Fix $ s\in F_{\mathcal{A}}(\mathcal{A}\hat{\otimes} \mathcal{A}) $ as mentioned in Remark \ref{s}.
		  We note that $ \ker\Delta_{\mathrm{WAP}}  $ is a normal dual $ F(\mathcal{A}) $-bimodule, now we define a $ w^*\!\!-w^*$-continuous derivation $ D: F(\mathcal{A}) \rightarrow \ker\Delta_{\mathrm{WAP}} $ by $ D(\textbf{a})=s \cdot \textbf{a} - \textbf{a} \cdot s \quad (\textbf{a} \in F(\mathcal{A}))$. Since $ F(\mathcal{A}) $ is approximately Connes-amenable, the derivation $D$  is approximately inner. Thus there is a net $ (N_\alpha) \subseteq \ker\Delta_{\mathrm{WAP}}  $ such that for every $ a \in \mathcal{A} $
	\begin{eqnarray}\label{lim}
		s \cdot a - a \cdot s=\lim_\alpha (a \cdot N_\alpha -N_\alpha \cdot a).
	\end{eqnarray}
	If we set  $ M_\alpha = N_\alpha +s $ for every
	$ \alpha $, then  we have $\Delta_{\mathrm{WAP}}(M_\alpha)  =   \Delta_{\mathrm{WAP}}(s)  
	=e $ and for every $a \in \mathcal{A}$ using (\ref{lim}) we have
	$$ a \cdot M_\alpha - M_\alpha \cdot a \rightarrow 0,$$
	 as required.
	
	(ii)$  \Rightarrow$(i) The hypothesis in (ii) ensures that each $ M_\alpha -s $ is in $ \ker\Delta_{\mathrm{WAP}} $ and for every $ a \in \mathcal{A} $
	\begin{eqnarray}\label{1}
		a\cdot (M_\alpha -s)-(M_\alpha -s) \cdot a \rightarrow s \cdot a - a \cdot s.
	\end{eqnarray}
	Let $ N $ be a unit-linked normal dual $ F(\mathcal{A}) $-bimodule and let $ D: \mathcal{A} \rightarrow N $ be a bounded derivation. We show that $ D $ is approximately inner, so by Lemma \ref{12}, $ F(\mathcal{A}) $ is approximately Connes-amenable.
Using the terminology of \cite [Theorem 6.8]{CSS}, if we define 
$ d(a)=s \cdot a - a \cdot s$ for all $a \in \mathcal{A}$, then $ d:\mathcal{A} \rightarrow \ker\Delta_{\mathrm{WAP}} $ is weakly universal for derivation $D$ with coefficient in $N$, that is,   there exists a $ w^*\!\!-w^*$-continuous  (and so it is norm continuous) $ \mathcal{A} $-bimodule map
	$ f :\ker\Delta_{\mathrm{WAP}} \rightarrow N $ such that $ fd=D $. Set $ y_\alpha=f(M_\alpha -s) $ for every $\alpha$.  Using (\ref{1}) for every $ a \in \mathcal{A} $ we have
	\begin{align*}
		D(a)
		&=fd(a)
		= f(s \cdot a - a  \cdot s )\\
		&=f(\lim _\alpha a\cdot (M_\alpha -s)-(M_\alpha -s) \cdot a ) \\
		&= \lim _\alpha f( a\cdot (M_\alpha -s)-(M_\alpha -s) \cdot a )  \\
		&=\lim _\alpha a \cdot y_\alpha -y_\alpha \cdot a.
	\end{align*}
	Hence $ D $ is approximately inner and this completes the proof.
\end{proof}
 Now we introduce a notion of an approximate WAP-virtual diagonal for $ \mathcal{A} $.
\begin{den}
	Let $ \mathcal{A} $ be a Banach algebra and let $ F(\mathcal{A}) $ has an identity element $ e $.
	An \textit{approximate $\mathrm{WAP}$-virtual diagonal}  for $ \mathcal{A} $ is a net $ (M_\alpha)_\alpha \subseteq F_{\mathcal{A}}(\mathcal{A}\hat{\otimes} \mathcal{A})  $ such that
	\begin{align}\label{deff}
	a \cdot M_\alpha - M_\alpha \cdot a \rightarrow 0\quad  (a \in \mathcal{A})  \quad \quad \hbox{and} \quad  \Delta_{\mathrm{WAP}}(M_\alpha) \rightarrow e
	\end{align}
 in the norm topology.
\end{den}
\begin{thm}\label{main} Let $ \mathcal{A} $ be a Banach algebra. Then the following are equivalent
	\begin{enumerate}
		\item[(i)] $ F(\mathcal{A}) $ is approximately Connes-amenable;
		
		\item[(ii)] $ \mathcal{A} $ has an approximate $\mathrm{WAP}$-virtual diagonal.
	\end{enumerate}
\end{thm}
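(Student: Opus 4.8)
The statement is, in effect, a repackaging of Lemma \ref{lemmm}, so the plan is to reduce everything to that lemma. The only difference between condition (ii) of Lemma \ref{lemmm} and the existence of an approximate $\mathrm{WAP}$-virtual diagonal is the normalisation of the image under $\Delta_{\mathrm{WAP}}$: Lemma \ref{lemmm}(ii) demands $\Delta_{\mathrm{WAP}}(M_\alpha)=e$ for every $\alpha$, whereas the definition of an approximate $\mathrm{WAP}$-virtual diagonal only asks that $\Delta_{\mathrm{WAP}}(M_\alpha)\to e$ in norm. Thus I intend to show that these two a priori different requirements produce equivalent notions.

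The implication (i)$\Rightarrow$(ii) requires no work: by Lemma \ref{lemmm}, approximate Connes-amenability of $F(\mathcal{A})$ provides a net $(M_\alpha)_\alpha\subseteq F_\mathcal{A}(\mathcal{A}\hat\otimes\mathcal{A})$ with $a\cdot M_\alpha-M_\alpha\cdot a\to 0$ for all $a\in\mathcal{A}$ and $\Delta_{\mathrm{WAP}}(M_\alpha)=e$; since in particular $\Delta_{\mathrm{WAP}}(M_\alpha)\to e$, this net is already an approximate $\mathrm{WAP}$-virtual diagonal.

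For (ii)$\Rightarrow$(i) I would take an approximate $\mathrm{WAP}$-virtual diagonal $(M_\alpha)_\alpha$ and correct it so as to meet the exact normalisation of Lemma \ref{lemmm}(ii). Fix $s\in F_\mathcal{A}(\mathcal{A}\hat\otimes\mathcal{A})$ with $\Delta_{\mathrm{WAP}}(s)=e$, as supplied by Remark \ref{s}, put $c_\alpha:=\Delta_{\mathrm{WAP}}(M_\alpha)\in F(\mathcal{A})$ (so that $\|c_\alpha-e\|\to 0$), and set $M_\alpha':=M_\alpha+(e-c_\alpha)\cdot s$. Because $\Delta_{\mathrm{WAP}}$ is a bounded $F(\mathcal{A})$-bimodule map (\cite[Corollary 5.2]{CSS}) with $\Delta_{\mathrm{WAP}}(s)=e$, one gets $\Delta_{\mathrm{WAP}}(M_\alpha')=c_\alpha+(e-c_\alpha)\cdot e=e$ for every $\alpha$. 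Writing $\mathbf{a}:=\eta_\mathcal{A}(a)$ and using associativity of the $F(\mathcal{A})$-module action, one splits
\[
a\cdot M_\alpha'-M_\alpha'\cdot a=\bigl(a\cdot M_\alpha-M_\alpha\cdot a\bigr)+\bigl(\mathbf{a}(e-c_\alpha)\bigr)\cdot s-(e-c_\alpha)\cdot(s\cdot\mathbf{a}).
\]
The first bracket tends to $0$ by hypothesis; since $s$ and $\mathbf{a}$ are fixed and the module action of $F(\mathcal{A})$ on $F_\mathcal{A}(\mathcal{A}\hat\otimes\mathcal{A})$ is bounded, the remaining two terms have norm bounded by a fixed multiple of $\|e-c_\alpha\|$ and hence also tend to $0$. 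Therefore $(M_\alpha')_\alpha$ satisfies condition (ii) of Lemma \ref{lemmm}, and that lemma yields (i).

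The single delicate point is precisely this perturbation argument, together with the reason a naive fix fails: the net $(M_\alpha)$ in an approximate $\mathrm{WAP}$-virtual diagonal is not assumed to be bounded, so although $c_\alpha$ is eventually invertible in the unital Banach algebra $F(\mathcal{A})$ and $c_\alpha^{-1}\cdot M_\alpha$ has the correct image $e$ under $\Delta_{\mathrm{WAP}}$, one cannot control $a\cdot(c_\alpha^{-1}\cdot M_\alpha)-(c_\alpha^{-1}\cdot M_\alpha)\cdot a$. Replacing $M_\alpha$ additively by $M_\alpha+(e-c_\alpha)\cdot s$, with $s$ fixed, removes the difficulty, because the size of the added term is governed only by $\|e-c_\alpha\|\to 0$. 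Apart from this, the argument is routine bookkeeping around Lemma \ref{lemmm} and the $F(\mathcal{A})$-bimodule properties of $\Delta_{\mathrm{WAP}}$.
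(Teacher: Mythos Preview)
Your argument is correct, and for (ii)$\Rightarrow$(i) it takes a genuinely different route from the paper. You reduce back to Lemma~\ref{lemmm}(ii) by the additive correction $M_\alpha'=M_\alpha+(e-c_\alpha)\cdot s$, which uses only that $F_{\mathcal{A}}(\mathcal{A}\hat\otimes\mathcal{A})$ is a Banach $F(\mathcal{A})$-bimodule and that $\Delta_{\mathrm{WAP}}$ is an $F(\mathcal{A})$-bimodule map; this is short and yields the pleasant structural by-product that the ``$\Delta_{\mathrm{WAP}}(M_\alpha)=e$'' and ``$\Delta_{\mathrm{WAP}}(M_\alpha)\to e$'' versions of the diagonal condition are actually equivalent. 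The paper, by contrast, does not pass back through Lemma~\ref{lemmm} at all for this implication: it works directly via Lemma~\ref{12}, taking a bounded derivation $D:\mathcal{A}\to N$ into a unit-linked normal dual $F(\mathcal{A})$-bimodule, defining $\lambda_\alpha\in N$ by $\lambda_\alpha(x)=\langle M_\alpha,f_x\rangle$ with $f_x(a\otimes b)=(a\cdot D(b))(x)$, and then estimating $(a\cdot\lambda_\alpha-\lambda_\alpha\cdot a)(x)-D(a)(x)$ explicitly in terms of $\|a\cdot M_\alpha-M_\alpha\cdot a\|$ and $\|\Delta_{\mathrm{WAP}}(M_\alpha)-e\|$. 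Your approach is cleaner and avoids the bookkeeping with the $f_x$'s; the paper's approach has the advantage of producing an explicit formula for the approximating net $(\lambda_\alpha)$ directly from $(M_\alpha)$ and $D$.
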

\begin{proof}
(i)$  \Rightarrow$(ii) It is immediate by  Lemma \ref{lemmm}.
	
(ii)$  \Rightarrow$(i) Suppose that $ \mathcal{A} $ has an approximate $\mathrm{WAP}$-virtual diagonal. Then there exists a  net $ (M_\alpha)_\alpha \subseteq F_{\mathcal{A}}(\mathcal{A}\hat{\otimes} \mathcal{A})  $ such that for every $ a \in \mathcal{A} $, 
\begin{equation}\label{e-1}
a \cdot M_\alpha - M_\alpha \cdot a \rightarrow 0 \quad{\hbox {and}}\quad \Delta_{\mathrm{WAP}}(M_\alpha) \rightarrow e.
\end{equation}

Let $ N $ be a unit-linked normal dual $ F(\mathcal{A}) $-bimodule and let $ D:\mathcal{A} \rightarrow N  $ be a bounded derivation. We show that $ D $ is approximately inner and then by Lemma \ref{12}, $ F(\mathcal{A}) $ is approximately Connes-amenable.

 For every $ x \in N_* $ consider the functional $ f_x \in  F_{\mathcal{A}}(\mathcal{A}\hat{\otimes} \mathcal{A} )^* $ defined by $ f_x(a \otimes b)=(a\cdot D(b))(x) \quad (a, b \in \mathcal{A}) $.
   Note that for every $ m \in \mathcal{A}\hat{\otimes} \mathcal{A} $ we have
	$$ f_{x\cdot a-a\cdot x}(m)=(f_x\cdot a - a \cdot f_x)(m)+(\Delta(m)\cdot D(a))(x). $$ 
	Consider the $ \mathcal{A} $-bimodule map  $ \Delta:\mathcal{A}\hat{\otimes} \mathcal{A} \rightarrow \mathcal{A}.$ Then
by \cite [Corollary 5.2]{CSS} there exists a unique $w^*\!\!-w^*$-continuous linear map
$\Delta_{\mathrm{WAP}}: F_{\mathcal{A}}(\mathcal{A}\hat{\otimes} \mathcal{A})\to F (\mathcal{A})$
  makes the following diagram  
	\begin{center}
		\begin{tikzpicture}
		\matrix [matrix of math nodes,row sep=1cm,column sep=1cm,minimum width=1cm]
		{
			|(A)| \displaystyle \mathcal{A}\hat{\otimes} \mathcal{A}  &   |(B)|  \mathcal{A}     \\
			|(C)|     F_{\mathcal{A}}(\mathcal{A}\hat{\otimes} \mathcal{A})     &   |(D)|  F (\mathcal{A}) \\
		};
		\draw[->]    (A)-- node [above] { $ \Delta $}(B);
		\draw[->]   (A)--  node [left] { $ \eta_{\mathcal{A}\hat{\otimes} \mathcal{A}} $} (C);
		\draw[->]  (C)-- node [below]   { $ \Delta_{\mathrm{WAP}} $}(D);
		\draw[->]  (B)-- node [right]  {$ \eta_\mathcal{A}$}(D);
		\end{tikzpicture}
	\end{center}
commute.
Since $\eta_{\mathcal{A}\hat{\otimes} \mathcal{A}}  (\mathcal{A}\hat{\otimes} \mathcal{A}) $ is
$ w^*$-dense  in  $ F_{\mathcal{A}}(\mathcal{A}\hat{\otimes} \mathcal{A}) $,  there is a net $ (m^\alpha_\beta)_\beta \subseteq \mathcal{A}\hat{\otimes} \mathcal{A} $ such that 
$ \eta_{\mathcal{A}\hat{\otimes} \mathcal{A}}(m^\alpha_\beta)\xrightarrow{w^*}  M_\alpha  $ for every $ \alpha $. 

 Set $\lambda_\alpha(x)=\langle M_\alpha, f_x\rangle$   for each $ \alpha $, we have
	\begin{align*}
\langle a\cdot \lambda_\alpha - \lambda_\alpha \cdot a, x \rangle 
&=\langle \lambda_\alpha, x\cdot a -a\cdot x  \rangle\\
&= \langle M_\alpha, f_{x\cdot a-a\cdot x}\rangle\\
&=\lim_\beta \langle \eta_{\mathcal{A}\hat{\otimes} \mathcal{A}}(m^\alpha_\beta), f_{x\cdot a-a\cdot x}  \rangle\\
&= \lim_\beta \langle f_x\cdot a-a\cdot f_x,   m^\alpha_\beta \rangle + \lim_\beta \langle \Delta ( m^\alpha_\beta)\cdot D(a) ,x  \rangle \\
&= \lim_\beta \langle f_x\cdot a-a\cdot f_x,   m^\alpha_\beta \rangle + \lim_\beta \langle x , \eta_{\mathcal{A}} \Delta ( m^\alpha_\beta)\cdot D(a) \rangle \\
&= \lim_\beta \langle  \eta_{\mathcal{A}\hat{\otimes} \mathcal{A}}(m^\alpha_\beta), f_x\cdot a-a\cdot f_x     \rangle + \lim_\beta \langle x ,  \Delta_{\mathrm{WAP}} \eta_{\mathcal{A}\hat{\otimes} \mathcal{A}}( m^\alpha_\beta)\cdot D(a) \rangle \\
&=  \langle a\cdot M_\alpha - M_\alpha \cdot a, f_{x}   \rangle +   \langle  x , \Delta_{\mathrm{WAP}}  (M_\alpha)\cdot D(a) \rangle.
\end{align*}	
Because $\| f_x \| \leq \| D \| \| x \|   $,
\begin{equation*}
\Arrowvert (a\cdot \lambda_\alpha - \lambda_\alpha \cdot a)( x) -D(a)(x)\Arrowvert\leq \Arrowvert a \cdot M_\alpha - M_\alpha \cdot a\Arrowvert  \Arrowvert D \Arrowvert  \Arrowvert x \Arrowvert + \Arrowvert x \Arrowvert  \Arrowvert \Delta_{\mathrm{WAP}}  (M_\alpha) - e \Arrowvert  \Arrowvert D(a)\Arrowvert.
\end{equation*}
This together with equation (\ref{e-1})	imply that  $ D(a)=\lim_\alpha \mathrm{ad}_{\lambda_\alpha}(a)$ for every $a \in \mathcal{A}$, so $ D $ is approximately inner.
\end{proof}
\section{Approximate-type  virtual diagonals }
Suppose that  $ \mathcal{A} $ is a Banach algebra. Consider the $ \mathcal{A} $-bimodule  map $ \Delta^*:\mathcal{A}^*\rightarrow (\mathcal{A}\hat{\otimes} \mathcal{A})^*  $. Let $ V\subseteq  (\mathcal{A}\hat{\otimes} \mathcal{A})^*$ be a closed subspace  and let $ E\subseteq(\Delta^*)^{-1}(V)\subseteq \mathcal{A}^* $. Then we obtain a map $  \Delta^*\vert_{E}:E \rightarrow V$. We denote the adjoint of $  \Delta^*\vert_{E} $ by $ \Delta_E: V^* \rightarrow E^*. $
\begin{den}
 Let $ V\subseteq  (\mathcal{A}\hat{\otimes} \mathcal{A})^*$ be a non-zero closed subspace and let $ E\subseteq(\Delta^*)^{-1}(V)\subseteq \mathcal{A}^* $.  An \textit{approximate $ V $-virtual diagonal} for $ \mathcal{A} $ is a net $( M_\alpha)\subseteq V^* $ satisfies 
$$ a\cdot M_\alpha -M_\alpha\cdot a \rightarrow 0\qquad \text{and}\quad  \langle \Delta_E(M_\alpha)\cdot a,\varphi\rangle \rightarrow \langle\varphi,a\rangle \qquad (a\in \mathcal{A}, \:\varphi \in E).$$
\end{den}
We consider the following cases
	\begin{enumerate}
\item[1-] Let $ V=(\mathcal{A}\hat{\otimes} \mathcal{A})^* $. Then   we obtain a new concept of a diagonal for $\mathcal{A}  $, called an \textit{approximate virtual diagonal}, that is, a -not necessarily bounded-  net $( M_\alpha)\subseteq(\mathcal{A}\hat{\otimes} \mathcal{A})^{**} $ such that
$$ a\cdot M_\alpha -M_\alpha\cdot a \rightarrow 0\qquad \text{and}\quad  \langle \Delta^{**}(M_\alpha)\cdot a,\varphi\rangle \rightarrow \langle\varphi,a\rangle \qquad (a\in \mathcal{A}, \varphi \in \mathcal{A}^*).$$
\item[2-]Let $ V= (\mathcal{A}\hat{\otimes} \mathcal{A})^*_{ess} \subseteq  (\mathcal{A}\hat{\otimes} \mathcal{A})^*  $. Then we obtain an \textit{approximate $ (\mathcal{A}\hat{\otimes} \mathcal{A})^*_{ess} $-virtual diagonal} for $A$.
	\end{enumerate}
In the following proposition we show that 
how these two diagonals deal with.
\begin{pro}\label{pro}
	Let $ \mathcal{A} $ be a Banach algebra with a bounded approximate identity. If $\mathcal{A}  $ has an  approximate $ (\mathcal{A}\hat{\otimes} \mathcal{A})^*_{ess} $-virtual diagonal, then it has an approximate virtual diagonal.
\end{pro}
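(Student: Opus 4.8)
The plan is to manufacture an approximate virtual diagonal as a ``thickened'' version of the given approximate $\mathrm{WAP}$-type diagonal, exploiting that a bounded approximate identity makes $\mathcal{A}\hat{\otimes}\mathcal{A}$ neo-unital. Fix a bounded approximate identity $(e_\lambda)$ of $\mathcal{A}$, bounded by $C$. By Cohen factorisation every elementary tensor is of the form $b\cdot(x\otimes y)\cdot c$, so $\mathcal{A}\hat{\otimes}\mathcal{A}=(\mathcal{A}\hat{\otimes}\mathcal{A})_{ess}$, and hence $e_\lambda\cdot\psi\cdot e_\mu\in(\mathcal{A}\hat{\otimes}\mathcal{A})^*_{ess}$ for every $\psi\in(\mathcal{A}\hat{\otimes}\mathcal{A})^*$ and all $\lambda,\mu$. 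Applying Remark~\ref{ess} to the Banach $\mathcal{A}$-bimodule $(\mathcal{A}\hat{\otimes}\mathcal{A})^*$ shows that $\big((\mathcal{A}\hat{\otimes}\mathcal{A})^*_{ess}\big)^\perp$ is complemented in $(\mathcal{A}\hat{\otimes}\mathcal{A})^{**}$, and the standard averaging of the complementing projection against a $w^*$-cluster point of $(e_\lambda)$ turns it into an $\mathcal{A}$-bimodule projection; this is the abstract shadow of the explicit truncations used below.

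Let $(M_\alpha)\subseteq\big((\mathcal{A}\hat{\otimes}\mathcal{A})^*_{ess}\big)^*$ be the given approximate $(\mathcal{A}\hat{\otimes}\mathcal{A})^*_{ess}$-virtual diagonal, with coefficient space $E$; then $\Delta^*(E)\subseteq(\mathcal{A}\hat{\otimes}\mathcal{A})^*_{ess}$ and $\mathcal{A}^*_{ess}\subseteq E$, so $e_\lambda\cdot\varphi\cdot e_\lambda\in E$ for each $\varphi\in\mathcal{A}^*$. Choose for each $\alpha$ a Hahn--Banach extension $\widehat{M}_\alpha\in(\mathcal{A}\hat{\otimes}\mathcal{A})^{**}$ of $M_\alpha$ and put $M_\alpha^\lambda:=e_\lambda\cdot\widehat{M}_\alpha\cdot e_\lambda$; since $e_\lambda\psi e_\lambda$ is essential, $M_\alpha^\lambda$ depends only on $M_\alpha$ and $\|M_\alpha^\lambda\|\le C^2\|M_\alpha\|$. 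Pairing $a\cdot M_\alpha^\lambda-M_\alpha^\lambda\cdot a$ with $\psi\in(\mathcal{A}\hat{\otimes}\mathcal{A})^*$ and again using that $e_\lambda\psi e_\lambda$ is essential (where $M_\alpha^\lambda$ agrees with $M_\alpha$) gives
\[
\|a\cdot M_\alpha^\lambda-M_\alpha^\lambda\cdot a\|\ \le\ C^2\,\|a\cdot M_\alpha-M_\alpha\cdot a\|\ +\ 2C\,\|M_\alpha\|\,\|ae_\lambda-e_\lambda a\|.
\]
Moreover, since $\Delta$ (hence $\Delta^{*}$ and $\Delta^{**}$) is an $\mathcal{A}$-bimodule map, a direct computation yields, for $a\in\mathcal{A}$ and $\varphi\in\mathcal{A}^*$,
\[
\langle\Delta^{**}(M_\alpha^\lambda)\cdot a,\varphi\rangle\ =\ \langle\Delta_E(M_\alpha),\,(e_\lambda a)\cdot\varphi\cdot e_\lambda\rangle\ =\ \langle\Delta_E(M_\alpha)\cdot a,\,e_\lambda\varphi e_\lambda\rangle\ +\ O\!\big(\|M_\alpha\|\,\|ae_\lambda-e_\lambda a\|\,\|\varphi\|\big),
\]
with $e_\lambda\varphi e_\lambda\in E$, so the defining property of $(M_\alpha)$ gives, for each fixed $\lambda$, $\langle\Delta_E(M_\alpha)\cdot a,e_\lambda\varphi e_\lambda\rangle\to\langle\varphi,e_\lambda a e_\lambda\rangle$ as $\alpha$ grows, while $\langle\varphi,e_\lambda a e_\lambda\rangle\to\langle\varphi,a\rangle$ as $\lambda$ grows.

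The main obstacle is combining these two displays into a single net. The derivation estimate forces the approximate-identity index to be taken large relative to the (possibly unbounded) quantity $\|M_\alpha\|$, whereas invoking the defining property with the $\lambda$-dependent essential functional $e_\lambda\varphi e_\lambda$ forces $\alpha$ to be large relative to $\lambda$; these pull in opposite directions. I would reconcile them by indexing the new net over a Moore--Smith refinement of the original directed set together with the set of functions into the approximate-identity index set (so the approximate-identity element attached to $\alpha$ may always be chosen large relative to $\|M_\alpha\|$, while arbitrarily large $\lambda$ are still realised cofinally with $\alpha$ dominating them). Along such a net the error terms vanish, $\|a\cdot M_\alpha-M_\alpha\cdot a\|\to0$, and $\langle\Delta^{**}(\,\cdot\,)\cdot a,\varphi\rangle\to\langle\varphi,a\rangle$ for every $\varphi\in\mathcal{A}^*$, producing an approximate virtual diagonal. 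Checking that the reindexing honours both requirements simultaneously (in particular that the iterated limit $\lim_\alpha\lim_\lambda$ appearing in the diagonal identity is realised) is the delicate point of the argument.
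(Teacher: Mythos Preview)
Your argument has a genuine gap at precisely the point you flag as ``delicate,'' and a Moore--Smith reindexing cannot close it. The error term $\|M_\alpha\|\,\|ae_\lambda-e_\lambda a\|$ (in both displays) forces $\lambda$ to be chosen \emph{after} $\alpha$, so as to beat the possibly unbounded $\|M_\alpha\|$. But the term $\langle\Delta_E(M_\alpha)\cdot a,\,e_\lambda\varphi e_\lambda\rangle-\langle e_\lambda\varphi e_\lambda,a\rangle$ forces $\alpha$ to be chosen \emph{after} $\lambda$: the defining property of $(M_\alpha)$ gives only pointwise convergence in the functional slot, and there is no uniformity over the bounded family $\{e_\lambda\varphi e_\lambda\}_\lambda$ (the functionals $\Delta_E(M_\alpha)\cdot a$ need not be bounded, so even uniform convergence on compacta is unavailable). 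Neither iterated limit order works for both conditions, and any diagonal subnet must respect one order or the other eventually. Your scheme would succeed if $(M_\alpha)$ were bounded, but that is exactly what one cannot assume in the approximate setting.

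The paper avoids the whole difficulty by using the very tool you mention and then set aside. Rather than the $\lambda$-dependent truncations $e_\lambda\cdot(\cdot)\cdot e_\lambda$, one takes the honest $\mathcal{A}$-bimodule section $\iota^{-1}:V^*\to(\mathcal{A}\hat{\otimes}\mathcal{A})^{**}$ coming from the bimodule complementation of $V^\perp$ (Remark~\ref{ess}). With $M_\alpha:=\iota^{-1}(F_\alpha)$ one gets $a\cdot M_\alpha-M_\alpha\cdot a=\iota^{-1}(a\cdot F_\alpha-F_\alpha\cdot a)\to 0$ immediately, with no error term at all. For the diagonal condition, instead of sandwiching the functional $\psi$, one Cohen-factors the \emph{element} $a=ybx$ once and for all; then $bx\cdot\psi\cdot y\in\mathcal{A}^*_{ess}=E$ for every $\psi\in\mathcal{A}^*$, and the gap between $\langle\Delta^{**}(M_\alpha)\cdot a,\psi\rangle$ and $\langle\Delta_E(F_\alpha)\cdot b,\,x\cdot\psi\cdot y\rangle$ is controlled by $\|y\cdot M_\alpha-M_\alpha\cdot y\|$, which tends to zero by the commutation just established --- not by anything proportional to $\|M_\alpha\|$. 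A single index suffices and no iterated limit enters.
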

\begin{proof}
	Let $ V=(\mathcal{A}\hat{\otimes} \mathcal{A})^*_{ess} $.	By Remark \ref{ess}, $V  $  is a closed subspace of $(\mathcal{A}\hat{\otimes} \mathcal{A})^*  $ and we have an isomorphism $ \tau : V^* \rightarrow (\mathcal{A}\hat{\otimes} \mathcal{A})^{**}/V^\bot$ defined by $ \tau (T)=x+V^\bot $ for some $ x \in (\mathcal{A}\hat{\otimes} \mathcal{A})^{**} $.
	 Since $V^\bot$  is complemented in $(\mathcal{A}\hat{\otimes} \mathcal{A})^{**  }$, 
 	there exists an $ \mathcal{A} $-bimodule projection $ P:(\mathcal{A}\hat{\otimes} \mathcal{A})^{**  } \rightarrow V^\bot $ and so there is  an $ \mathcal{A} $-bimodule isomorphism   $ \iota :(I-P)(\mathcal{A}\hat{\otimes} \mathcal{A})^{**  }\rightarrow V^* $. 	
 	For every $v \in V$ and $T \in V^*$ by definition of $\tau$ we have   
 	 $\tau (T)=x+V^\bot $ for some $ x \in (\mathcal{A}\hat{\otimes} \mathcal{A})^{**} $, thus
 	  $$\langle \iota^{-1}(T),v\rangle =\langle (I-P)x,v  \rangle=\langle x,v  \rangle - \langle  P x, v\rangle=\langle x,v  \rangle=\langle x+V^\bot,v  \rangle=\langle \tau (T),v  \rangle,   $$ that is, $ \iota^{-1}(T)|_{V} =T$ for every $ T\in V^* $.
 	 	 Now suppose that $ (F_\alpha)\subseteq V^* $ is an approximate $ (\mathcal{A}\hat{\otimes} \mathcal{A})^*_{ess} $-virtual diagonal for $ \mathcal{A} $ and
	set $ M_\alpha=\iota^{-1}(F_\alpha)\in (\mathcal{A}\hat{\otimes} \mathcal{A})^{**  } $ for every $\alpha$.
	Then
	\begin{eqnarray}\label{00}
	a \cdot M_\alpha - M_\alpha \cdot a = \iota^{-1}(a \cdot  F_\alpha - F_\alpha \cdot a) \rightarrow 0.
	\end{eqnarray}
	 Moreover, for $ a \in \mathcal{A} $ by Cohen's factorization theorem, there exist 
	 $ x, b, y \in \mathcal{A}$ such that
	$ a= ybx $. Since  for every $\psi \in \mathcal{A}^*$ we have  $bx\cdot \psi  \cdot y \in \mathcal{A}^*_{ess}  $ and since 
	 $\mathcal{A}^*_{ess}=(\Delta^*)^{-1}(V)   $ \cite [Lemma 7.7]{CSS},  $ \Delta^*(bx\cdot \psi  \cdot y)\in V $. But $ M_\alpha |_{V}=\iota^{-1}(F_\alpha)|_{V}=F_\alpha $  for each $ \alpha $ . Using (\ref{00})
	\begin{equation}\label{e-2}
	\begin{split}
		\langle \Delta ^{**}(M_\alpha)\cdot a , \psi \rangle - \langle \Delta^{**}(F_\alpha) \cdot b,  x\cdot \psi  \cdot y\rangle
	&=\langle \Delta ^{**}(M_\alpha)\cdot a , \psi \rangle - \langle F_\alpha, \Delta^* (bx\cdot \psi  \cdot y)\rangle\\
	&= \langle \Delta ^{**}(M_\alpha)\cdot a , \psi \rangle - \langle M_\alpha, \Delta^* (bx\cdot \psi  \cdot y)\rangle\\
	&=\langle \Delta ^{**}(M_\alpha)\cdot a , \psi \rangle - \langle \Delta ^{**}(M_\alpha),  bx\cdot \psi  \cdot y\rangle\\
	&= \langle \Delta ^{**}(M_\alpha)\cdot a , \psi \rangle - \langle y \cdot \Delta ^{**}( M_\alpha)\cdot bx,  \psi  \rangle \rightarrow 0.
	\end{split}
	\end{equation}	
	Also since $ (F_\alpha) $ is an approximate $ (\mathcal{A}\hat{\otimes} \mathcal{A})^*_{ess} $-virtual diagonal and since $ \Delta^*(x \cdot \psi \cdot y)\in V $, we have
\begin{equation}\label{e-3}	 \langle \Delta ^{**}(F_\alpha)\cdot b , x \cdot\psi \cdot y  \rangle \rightarrow \langle x \cdot \psi \cdot y , b\rangle. \end{equation}
	Hence (\ref{e-2}) and (\ref{e-3}) imply that
	\begin{equation}\label{e-4}
	\begin{split}
			| \langle \Delta ^{**}(M_\alpha)\cdot a , \psi \rangle - \langle \psi , a\rangle |
	&\leq |\langle \Delta ^{**}(M_\alpha)\cdot a , \psi \rangle - \langle \Delta^{**}(F_\alpha) \cdot b,  x\cdot \psi  \cdot y\rangle|\\
	&+ | \langle \Delta^{**}(F_\alpha) \cdot b,  x\cdot \psi  \cdot y\rangle - \langle x \cdot \psi \cdot y , b\rangle|\to 0.
\end{split}	\end{equation}
	We conclude from (\ref{00}) and (\ref{e-4}) that
	$$	a \cdot M_\alpha - M_\alpha \cdot a \rightarrow 0,\qquad	 \langle \Delta ^{**}(M_\alpha)\cdot a , \psi \rangle \to \langle \psi , a\rangle, $$
which means that $ (M_\alpha) $ is an  approximate virtual diagonal for $ \mathcal{A} $.
\end{proof}

As a corollary of Lemma \ref{12}, if a Banach algebra $ \mathcal{A} $ is approximately amenable, then $ F(\mathcal{A}) $ is approximately Connes-amenable.
For the rest of the paper we consider the converse of this  result in the special case, whenever $ \mathcal{A}=\ell^1(G) $. Indeed,
 we show that for a discrete group $ G $, if $ \ell^1(G) $ has an approximate WAP-virtual diagonal,
then it has an approximate virtual diagonal, although the existence of  an approximate virtual diagonal is weaker than approximate amenability of $ \ell^1(G) $.

 We identify $ L^1(G )\hat{\otimes}L^1(G )  $ with $ L^1 (G\times G) $  as $ L^1(G) $-bimodules (see \cite [Example VI. 14]{BD} for instance).

Let $ I $  be  the closed ideal in $   L^\infty (G\times G)_{ess} $ generated  by $\Delta ^*(C_0(G))  $ as in \cite [Definition 8.1]{CSS}, that is,
$$ I=\overline{\hbox{lin}\{ \Delta ^*(f)g : f \in C_0(G), g \in   L^\infty (G\times G)_{ess} \}}. $$
\begin{rem}\label{re}
 The space of uniformly continuous bounded functions on $ G $ is denoted by $ \mathrm{UCB}(G) $. 
Since $ \Delta^*(\mathrm{UCB}(G)) = \Delta^* (L^\infty (G)_{ess})  \hookrightarrow L^\infty (G\times G)_{ess} $, we obtain  a map $ \Delta_{\mathrm{UCB}}:  L^\infty (G\times G)_{ess} ^* \rightarrow \mathrm{UCB}(G)^* $ as the adjoint of the inclusion map.
By \cite [Lemma 8.2]{CSS}, $ \Delta ^*(C_0(G))\subseteq I \subseteq  \mathrm{WAP}( L^\infty (G\times G))  $, so we obtain a map $ \Delta_{C_0}=(\Delta^*|_{C_0(G)})^*:I^* \rightarrow C_0(G)^*=M(G) $.
 By \cite [Proposition  8.5]{CSS} there is a $ G_d $-bimodule map $ S:I^* \rightarrow   L^\infty (G\times G)_{ess} ^* $ such that for every $ h \in \mathrm{UCB}(G) $ and $ \psi \in I^* $
$$\langle  \Delta_{\mathrm{UCB}} (S(\psi)),h \rangle=\langle   \iota(\Delta_{C_0}(\psi)), h\rangle,  $$
where $ \iota :M(G)\rightarrow \mathrm{UCB}(G)^* $ is the natural inclusion map defined by $ \langle \iota(\mu),h\rangle=\int_G h\:d\mu$ for every $\mu \in M(G)$ and  $h \in \mathrm{UCB}(G)$.
\end{rem}
\begin{thm}
Let $ G $	be a discrete group. Consider the following
statements
\begin{enumerate}
		\item[(i)]$ \ell^1(G) $ has an  approximate $\mathrm{WAP} $-virtual diagonal;
		\item[(ii)] $ \ell^1(G) $ has an  approximate $  \ell^\infty (G\times G)_{ess}  $-virtual diagonal;
		\item[(iii)] $ \ell^1(G) $ has an  approximate virtual diagonal.
	\end{enumerate}
\end{thm}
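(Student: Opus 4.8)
The plan is to prove the chain $(i)\Rightarrow(ii)\Rightarrow(iii)$, so that an approximate $\mathrm{WAP}$-virtual diagonal for $\ell^1(G)$ produces an approximate virtual diagonal. The step $(ii)\Rightarrow(iii)$ costs nothing: since $G$ is discrete, $\ell^1(G)$ is unital, in particular has a bounded approximate identity, so Proposition \ref{pro} applies verbatim. (For completeness one also has $(iii)\Rightarrow(ii)$ by simply restricting an approximate virtual diagonal in $(\ell^1(G)\hat{\otimes}\ell^1(G))^{**}$ to $\ell^\infty(G\times G)_{ess}$ and invoking \cite[Lemma 7.7]{CSS}.) All the real work is therefore in $(i)\Rightarrow(ii)$, which is the approximate counterpart of the argument of Choi \emph{et al.} in the Connes-amenable case, and I would run it through the maps recalled in Remark \ref{re}.

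Here is the route for $(i)\Rightarrow(ii)$. Identify $\ell^1(G)\hat{\otimes}\ell^1(G)$ with $\ell^1(G\times G)$, so that $F_{\ell^1(G)}(\ell^1(G)\hat{\otimes}\ell^1(G))=\mathrm{WAP}(\ell^\infty(G\times G))^{*}$. By \cite[Lemma 8.2]{CSS} the ideal $I$ is contained in $\mathrm{WAP}(\ell^\infty(G\times G))$, so dualizing the inclusion $I\hookrightarrow\mathrm{WAP}(\ell^\infty(G\times G))$ yields a norm-decreasing $\ell^1(G)$-bimodule map $\rho\colon\mathrm{WAP}(\ell^\infty(G\times G))^{*}\to I^{*}$. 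Given an approximate $\mathrm{WAP}$-virtual diagonal $(M_\alpha)$ for $\ell^1(G)$, set $\psi_\alpha:=\rho(M_\alpha)\in I^{*}$ and $F_\alpha:=S(\psi_\alpha)\in(\ell^\infty(G\times G)_{ess})^{*}$, with $S$ the bimodule map of Remark \ref{re}. I claim $(F_\alpha)$ is an approximate $\ell^\infty(G\times G)_{ess}$-virtual diagonal, i.e., an approximate $V$-virtual diagonal with $V=\ell^\infty(G\times G)_{ess}$, $E=(\Delta^{*})^{-1}(V)=\ell^\infty(G)_{ess}=\mathrm{UCB}(G)$ (the identifications by \cite[Lemma 7.7]{CSS} and Remark \ref{re}) and $\Delta_E=\Delta_{\mathrm{UCB}}$. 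The bimodule condition is immediate: $S\circ\rho$ is a bounded $\ell^1(G)$-bimodule map, so $a\cdot F_\alpha-F_\alpha\cdot a=(S\circ\rho)(a\cdot M_\alpha-M_\alpha\cdot a)\to 0$ for every $a\in\ell^1(G)$ by the hypothesis on $(M_\alpha)$.

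The substantive step is the second (``$\Delta$'') condition: for $a\in\ell^1(G)$ and $\varphi\in\mathrm{UCB}(G)$ one must show $\langle\Delta_{\mathrm{UCB}}(F_\alpha)\cdot a,\varphi\rangle\to\langle\varphi,a\rangle$. The defining property of $S$ in Remark \ref{re} gives $\Delta_{\mathrm{UCB}}(F_\alpha)=\iota(\Delta_{C_0}(\psi_\alpha))$ in $\mathrm{UCB}(G)^{*}$, so it is enough to control the net $\Delta_{C_0}(\psi_\alpha)\in M(G)$. Since $c_0(G)\subseteq\mathrm{WAP}(\ell^\infty(G))$, $\Delta^{*}(c_0(G))\subseteq I$, and $\Delta^{*}$ maps $\mathrm{WAP}(\ell^\infty(G))$ into $\mathrm{WAP}(\ell^\infty(G\times G))$, the obvious square of inclusions and restrictions of $\Delta^{*}$ commutes; dualizing reads $\Delta_{C_0}\circ\rho=r\circ\Delta_{\mathrm{WAP}}$, where $r\colon F(\ell^1(G))=\mathrm{WAP}(\ell^\infty(G))^{*}\to M(G)$ is restriction to $c_0(G)$. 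Hence $\Delta_{C_0}(\psi_\alpha)=r(\Delta_{\mathrm{WAP}}(M_\alpha))\to r(e)$ in norm, because $\Delta_{\mathrm{WAP}}(M_\alpha)\to e$ and $r$ is continuous; and $r(e)=\delta_e\in M(G)$, because $e=\eta_{\ell^1(G)}(\delta_e)$ and $\eta_{\ell^1(G)}(\delta_e)$ restricted to $c_0(G)$ is evaluation at the identity of $G$. Feeding this back, $\iota(\Delta_{C_0}(\psi_\alpha))\cdot a\to\iota(\delta_e)\cdot a$ in $\mathrm{UCB}(G)^{*}$, and a routine unwinding of the module actions in $(\ref{action})$ shows $\langle\iota(\delta_e)\cdot a,\varphi\rangle=\langle\varphi,a\rangle$; this yields the required convergence, in fact uniformly over $\|\varphi\|\le 1$.

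The hard part will be exactly the bookkeeping in this last step: confirming that each restriction/adjoint genuinely intertwines the relevant $\Delta$-maps, that all the intermediate elements really lie in $\mathrm{UCB}(G)$, and that the two norm hypotheses on $(M_\alpha)$ survive being pushed through $\rho$, $S$, $r$ and $\iota$. Conceptually nothing can go wrong, since every map in sight is bounded and linear and both hypotheses on $(M_\alpha)$ are norm statements; once $(i)\Rightarrow(ii)$ is in place, chaining it with Proposition \ref{pro} gives $(iii)$ and completes the proof.
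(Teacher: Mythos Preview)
Your argument is correct and follows essentially the same route as the paper: restrict the approximate $\mathrm{WAP}$-virtual diagonal to $I$, push it through the map $S$ of Remark~\ref{re} to obtain an approximate $\ell^\infty(G\times G)_{ess}$-virtual diagonal, and then invoke Proposition~\ref{pro}. The one point you should make explicit is that $S$ is a priori only a $G_d$-bimodule map (this is all Remark~\ref{re} provides), and it is precisely the discreteness of $G$ that lets you promote a bounded $G$-bimodule map to an $\ell^1(G)$-bimodule map---the paper singles this out as the reason the result is stated only for discrete groups.
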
	
Then the implications (i)$ \Rightarrow $(ii)$ \Rightarrow $(iii) hold. 
\begin{proof}	
(i)$ \Rightarrow $(ii)
 Let $ (M_\alpha)\subseteq F(\ell^1 (G\times G)) $ be an approximate WAP-virtual diagonal for 
$ \ell^1(G) $.
 Then for every $ a \in \ell^1(G) $
$$a \cdot M_\alpha - M_\alpha \cdot a \rightarrow 0  \quad (a \in \ell^1(G)) \quad   \hbox{and}   \quad   \Delta _{\mathrm{WAP}}(M_\alpha)\rightarrow  e, $$ where $ e $ denotes the identity element of  $ F(\ell^1(G)) $.
Since  $ \Delta ^*(c_0(G))\subseteq I \subseteq  \mathrm{WAP}( \ell^\infty (G\times G)) $, each $ M_\alpha $ can be restricted to a functional on $ I $. Let $ S:I^* \rightarrow   \ell^\infty (G\times G)_{ess} ^*  $ be as mentioned  in Remark \ref{re}.
 Set $ N_\alpha =S(M_\alpha |_{I}) $.
 We show that $ (N_\alpha)_\alpha $ is an approximate  $  \ell^\infty (G\times G)_{ess}  $-virtual diagonal.
Since $ G $ is discrete and  $ S $ is a continuous $G  $-bimodule map, it is a continuous $\ell^1(G)  $-bimodule map, so
$$ a \cdot N_\alpha - N_\alpha \cdot a = S(a \cdot M_\alpha - M_\alpha \cdot a)\rightarrow 0 \quad ( a \in \ell^1(G) ). $$
Since  $ (M_\alpha) $ is an  approximate WAP-virtual diagonal $ \Delta_{c_0}(M_\alpha |_{I})\cdot a\rightarrow a $  in $ \ell^1(G) $ and since the map $ \iota :\ell^1(G)\rightarrow \mathrm{UCB}(G)^* $ is $ w^*\!\!-w^*$-continuous, for every $ h \in \mathrm{UCB}(G)  $ we have
\begin{align*}
\langle h , a \rangle
	&= \lim_\alpha \langle  \iota \Delta_{c_0}(M_\alpha |_{I}\cdot a) , h \rangle\\
	&= \lim_\alpha \langle  \Delta_{\mathrm{UCB}}S(M_\alpha |_{I}\cdot a) , h \rangle\\
	&= \lim_\alpha \langle  \Delta_{\mathrm{UCB}}(N_\alpha \cdot a) , h \rangle.
\end{align*}
Hence $ (N_\alpha)_\alpha $ is an approximate  $  \ell^\infty (G\times G)_{ess}  $-virtual diagonal for $ \ell^1(G)  $.

(ii) $\Rightarrow $(iii)  holds by Proposition \ref{pro}.
\end{proof}
Note that,  it is not clear for us that 	 the map $ S $ in Remark \ref{re} is always an $ L^1(G) $-bimodule map. If $ S $ is  an $ L^1(G) $-bimodule map, then the  previous theorem holds  not only for discrete groups but also for each 
 locally compact group $ G $.


\end{document}